\newtheorem*{theorem}{Theorem}
\newtheorem{lemma}{Lemma}
\newtheorem*{coroll}{Corollary}
\newcommand{\R}{\mathbb{R}}
\newcommand{\Z}{\mathbb{Z}}
\author{S.\,Duzhin, M.\,Shkolnikov}
\title{Bipartite knots}
\date{}
\begin{document}
\maketitle

\begin{abstract}
We give a solution to a part of Problem 1.60 in Kirby's list of open problems 
in topology \cite{Kir} thus answering in the positive the question raised in 
1987 by J.\,Przytycki \cite{PP}.
\end{abstract}

\section{Problem}
We will call \textit{bipartite} a knot that can be represented by a
\textit{matched} diagram, that is, a diagram whose crossings are split in
pairs of the types depicted in Fig.~\ref{fig:p1}.
The pairs in the upper line are said to be \textit{positive}, those in the
lower line, \textit{negative}. Note the signs of the pairs do not change
when the orientation on the knot is reversed. Note, moreover, that if the
crossings of an unoriented knot are split into matched unoriented pairs,
then, introducing any orientation, we always get counter-directed pairs
shown in Fig.~\ref{fig:p1}.

\begin{figure}[htbp]
	\centering
		\includegraphics[width=0.5\textwidth]{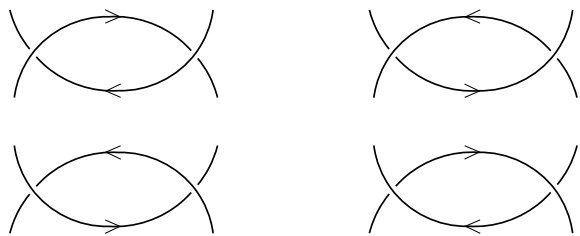}
	\caption{Matched pairs}
	\label{fig:p1}
\end{figure}

\textbf{Examples.}

1. Any rational knot has a matched diagram, because any rational number can be
represented by a continued fraction with even (positive or negative)
denominators (see \cite{DS}).

2. The standard diagram of the knot $8_{15}$ (which is not rational)
\begin{center}
    \includegraphics[height=30mm]{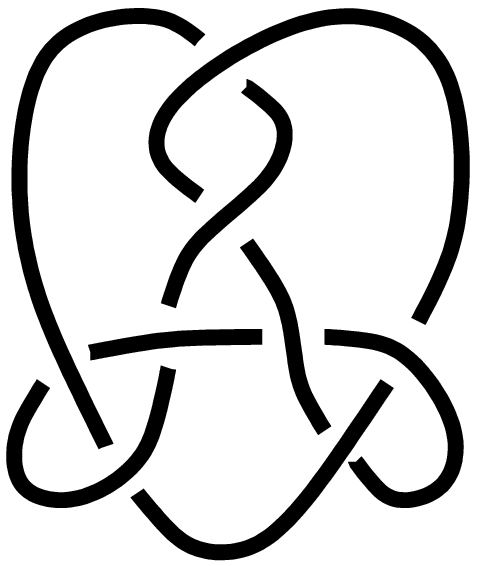}
\end{center}
can be easily transformed to a matched form:
\begin{center}
    \includegraphics[height=30mm]{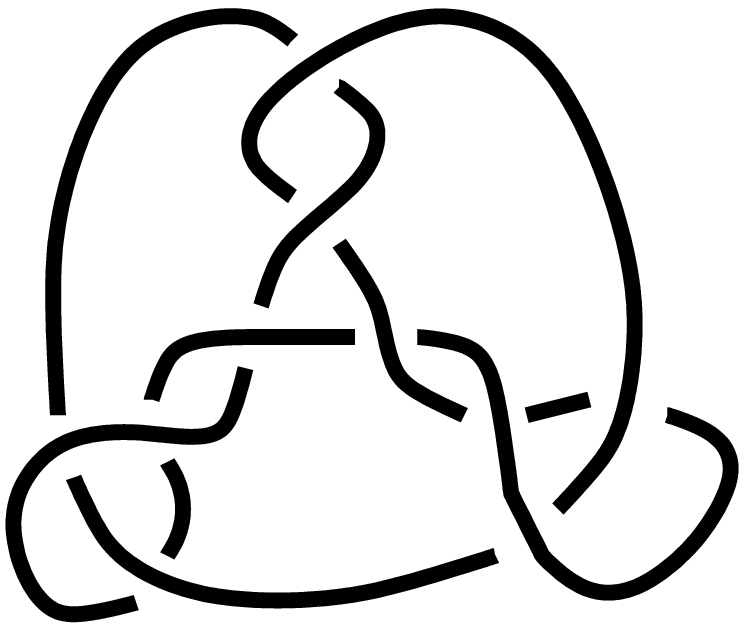}
\end{center}

3. We managed to find matched diagrams for all table knots with up to 8
crossings, save for the knot $8_{18}$.

The problem raised by Jozef Przytycki in 1987 is to investigate which knots
possess a matched diagram.  This question appears in the well-known
collection ``Open problems in
topology'' maintained by Rob Kirby \cite{Kir}, as part of Problem 1.60.
More exactly, Conjecture 1(a) therein (belonging to Przytycki \cite{PP}) 
reads: ``There are oriented knots without a matched diagram''.  As the
reader understands, the word ``oriented'' can be here omitted without any
loss of meaning.  This conjecture stayed open for 24 years, notwithstanding 
the effort of several
excellent mathematicians, including its author and J.~Conway \cite{APR}.  
We give a positive
solution to the conjecture, that is, demonstrate that some knots, e.g.  
pretzel knot with parameters $(3,3,-3)$, are not bipartite.  In the next 
section, we introduce our main construction, which also explains the meaning 
of the word ``bipartite'' in this context.

\section{Chord diagram of a bipartite knot}

Consider a matched diagram of a knot $K$.
Replace every matched pair of crossings by two parallel segments,
directed as the knot and joined by a common perpendicular, see 
Fig.~\ref{fig:p2}. 
\begin{figure}[htbp]
	\centering
		\includegraphics[width=0.5\textwidth]{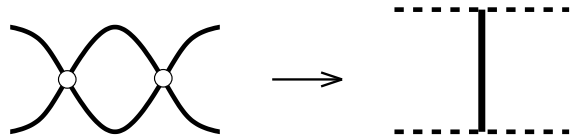}
	\caption{Local transformation of a matched diagram}
	\label{fig:p2}
\end{figure}

The parallel segments are then joined by the remaining
fragments of the knot diagram into a simple closed line on the plane, 
straightenable into a circle, whereas the common perpendiculars become 
\textit{chords}. An example for the matched diagram of the knot $8_{15}$
mentioned above, is given in Fig.~\ref{fig:md-cd}, where the inner and outer
chords are changed places: this leads to turning the knot diagram inside
outside with respect to some point and does not alter the isotopy type of
the knot.
\begin{figure}[htbp]
	\centering
	\includegraphics[height=4cm]{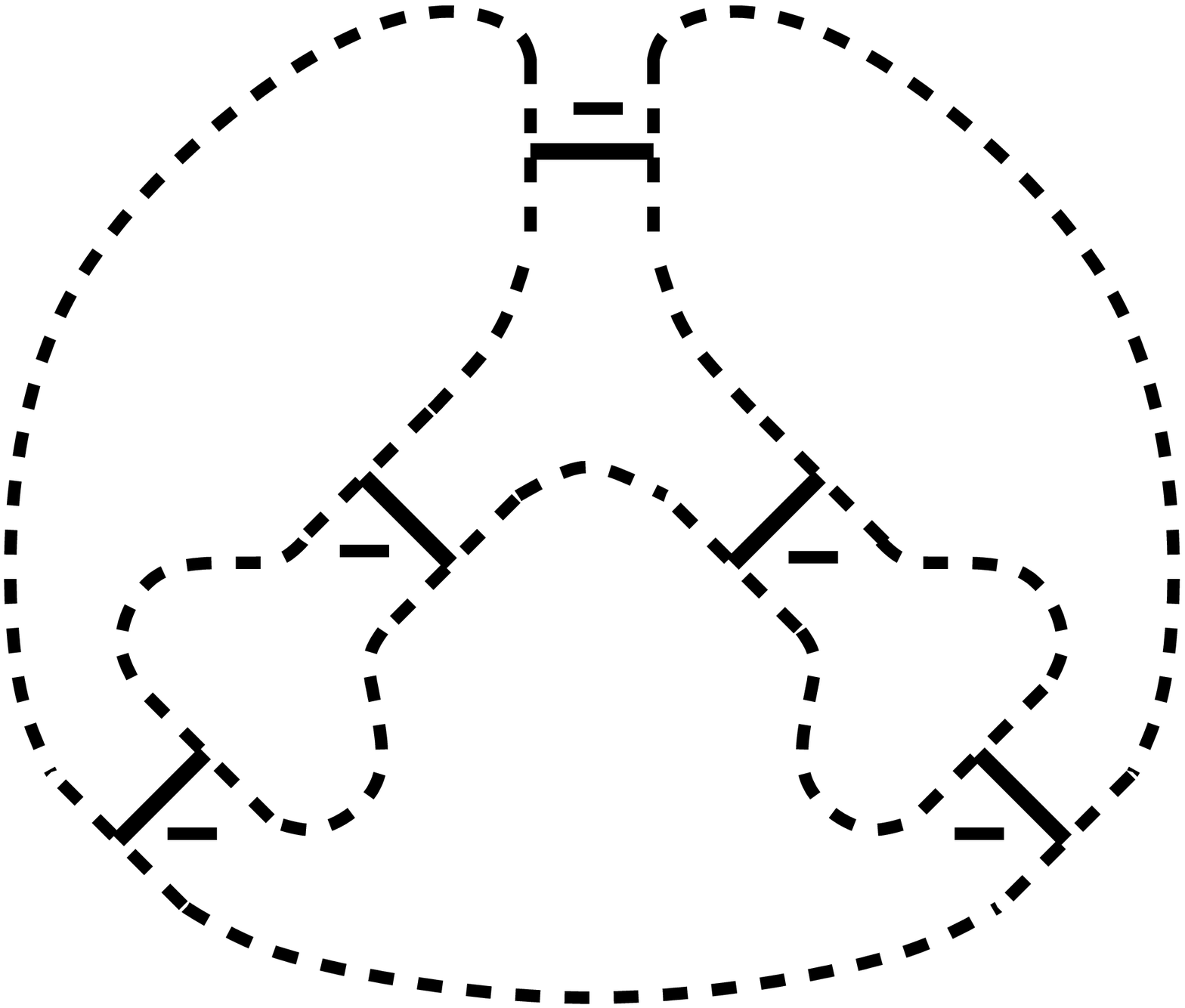}
	\quad
	\raisebox{19mm}{\includegraphics[width=1cm]{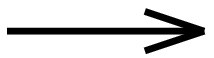}}
	\quad
	\raisebox{5mm}{\includegraphics[height=3cm]{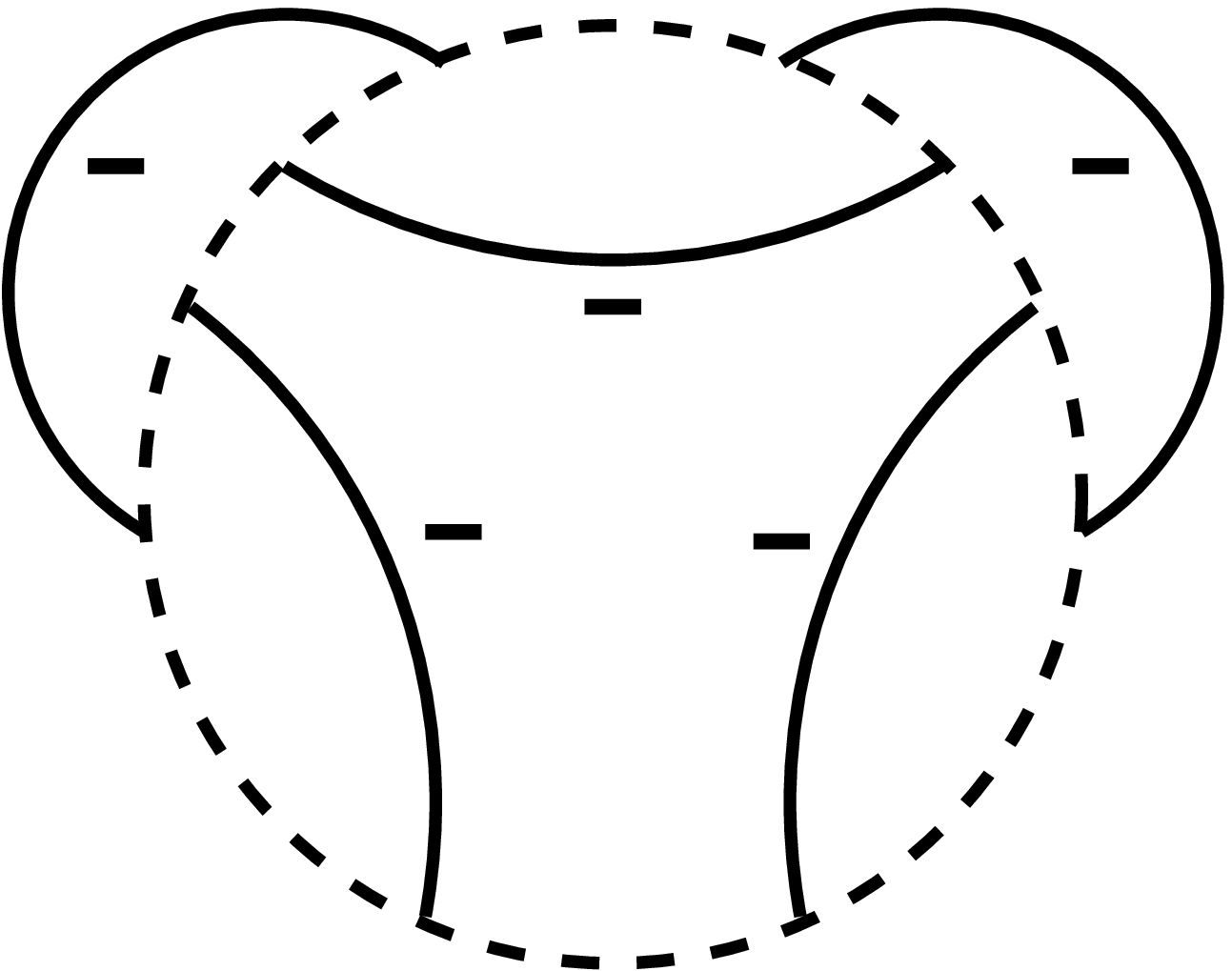}}
	\caption{Matched diagram $\to$ chord diagram}
	\label{fig:md-cd}
\end{figure}

The chord diagrams obtained in this way, are rather special:
the set of all chords is split into two parts (inner chords and outer
chords), so that the chords in each part do not intersect between
themselves, and the intersection graph of the whole diagram is bipartite.
 
This procedure is reversible: from a bipartite signed chord diagram
one can reconstruct the knot diagram in a unique way (see
Fig.~\ref{fig:cd-md}).
\begin{figure}[htbp]
	\centering
	\raisebox{5mm}{\includegraphics[height=3cm]{k8-15cd2.eps}}
	\quad
	\raisebox{19mm}{\includegraphics[width=1cm]{toto.eps}}
	\quad
	\includegraphics[height=4cm]{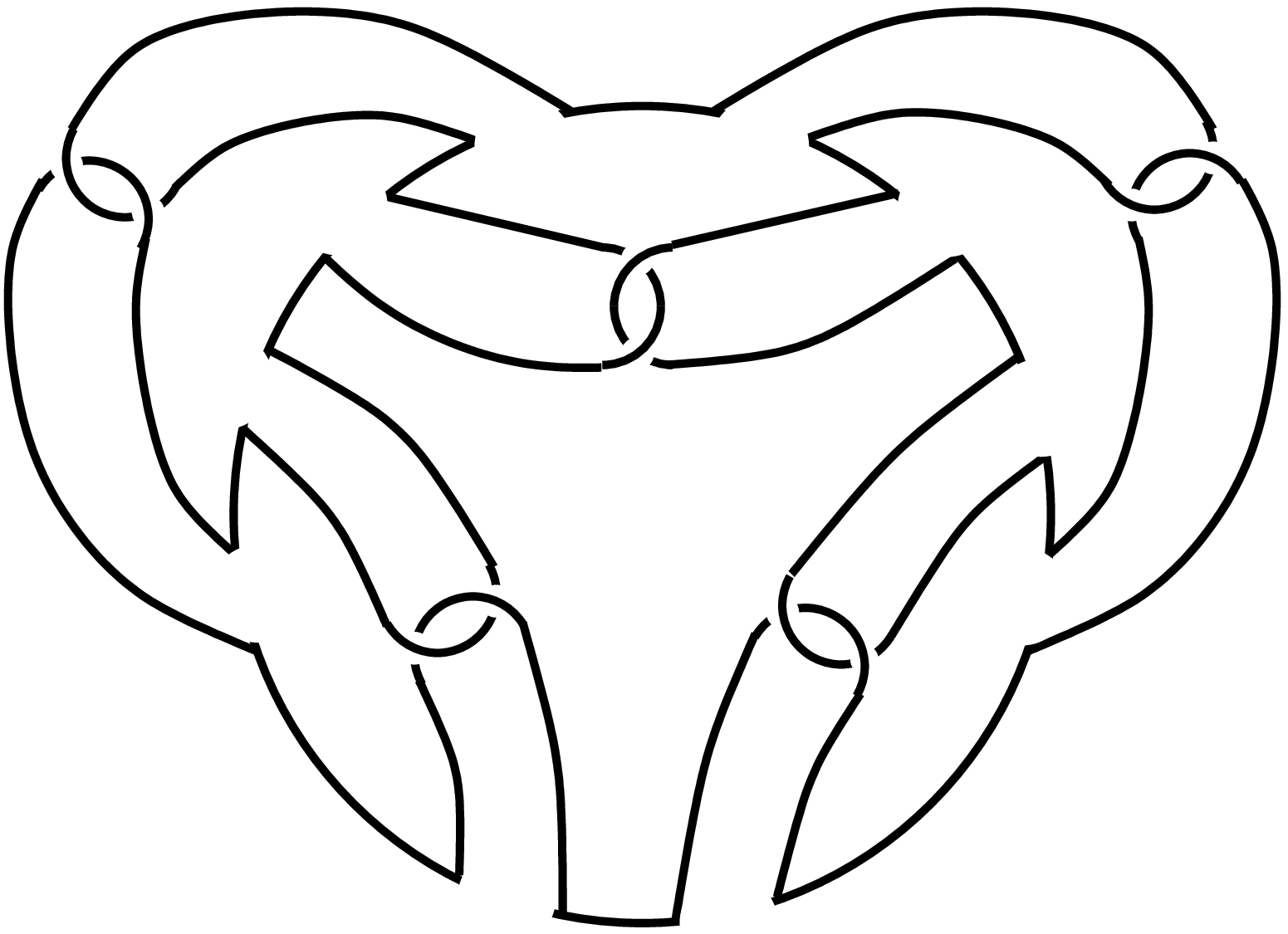}
	\caption{Chord diagram $\to$ matched diagram}
	\label{fig:cd-md}
\end{figure}

\section{Seifert surfaces}

A Seifert surface $S$ of a knot is a compact oriented surface embedded
in $\R^3$ so that its boundary is the given knot.
Choosing a basis in $H_1(S)$, one can construct a matrix of the bilinear
form $lk\circ (id,\alpha)$, where $lk$ is the linking number, and $\alpha$
is a small shift in the positive direction along the normal of $S$.
This matrix is called a Seifert matrix of the given knot. 

There is a standard procedure to construct a Seifert from any diagram, using
checkerboard coloring and Seifert circles. For matched diagrams there exists
a different construction, which is crucial for our needs:
it yields a Seifert matrix of a special type, which, in turn, produces
an Alexander matrix with extraordinary properties.

\begin{lemma}\label{lemma1}
Any bipartite knot has a Seifert surface such that its Seifert matrix
has the form
$\begin{pmatrix} E & 0 \\ I & F \end{pmatrix}$, where $I$, $0$, $E$, $F$ are
square matrices of the same size, $I$ is a unit matrix, $0$ is a zero matrix, 
and $E$ and $F$ are both symmetric integer matrices.
\end{lemma}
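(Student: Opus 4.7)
The plan is to construct an explicit Seifert surface $S$ from the chord diagram of the matched diagram and then compute its Seifert matrix in a basis of $H_1(S)$ tailored to the bipartite splitting. Put the planar ``circle'' of the chord diagram in the plane $z=0$ of $\R^3$; it bounds a disk $D$. For each chord attach to $D$ a band whose core is the chord and whose framing $\pm 1$ is chosen so that reversing the local move of Fig.~\ref{fig:p2} reproduces the two crossings of the associated matched pair. Route the inner-chord bands into the half-space $z>0$ and the outer-chord bands into $z<0$, producing an oriented surface with $\partial S=K$.

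Let $\alpha_1,\dots,\alpha_m$ (from inner chords) and $\beta_1,\dots,\beta_n$ (from outer chords) be the cycles obtained by closing each band's core with an arc inside $D$ joining the feet of the band; these form a basis of $H_1(S)\cong\Z^{m+n}$. Since the bipartite hypothesis forbids interleaving of the endpoints of two inner chords on $\partial D$, the closing arcs for the $\alpha_i$ can be drawn pairwise disjoint in $D$, so $\alpha_i\cdot\alpha_j=0$ on $S$; likewise for the $\beta_j$. Each family therefore spans an isotropic subspace of the nondegenerate symplectic module $H_1(S)$ of rank $m+n$, and together they span everything, which forces $m=n=g$ and both families to span Lagrangians. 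Writing $V=\bigl(\begin{smallmatrix}E&B\\C&F\end{smallmatrix}\bigr)$ in this basis, the standard relation that $V-V^T$ equals the intersection pairing on $H_1(S)$ then yields $E=E^T$, $F=F^T$, and $B-C^T=A$ where $A_{ij}=\alpha_i\cdot\beta_j$.

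It remains to arrange $B=0$ and $C=I$. First I would perform an integer change of basis inside the $\beta$-Lagrangian to normalize $A$ to $-I$; this is possible because unimodularity of the intersection pairing forces $\det A=\pm 1$, and the change preserves both the block structure and the symmetry of $F$. Second, I would exploit the spatial separation built into $S$: the pushoff $\alpha_i^+$ taken in the positive normal direction of $S$ can be arranged to lie entirely in $\{z\ge 0\}$, whereas every $\beta_j$ sits in $\{z\le 0\}$, so $\alpha_i^+$ bounds a Seifert surface inside the upper half-space that is disjoint from all $\beta_j$. This gives $\operatorname{lk}(\alpha_i^+,\beta_j)=0$, hence $B=0$, and then $C=B^T-A=I$ follows automatically.

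The main obstacle is the geometric linking computation in the final step. One must verify carefully that the positive-normal pushoff of an inner cycle, transported along the twisted inner bands, never crosses back through the plane $z=0$ into the lower half-space where the $\beta_j$ live; this is a delicate check of how the $\pm 1$-framings of the bands interact with the orientation of $S$. One also has to confirm that the integer basis change normalizing $A$ to $-I$ can be realized by cycles that still stay in $\{z\le 0\}$, so that the half-space argument continues to apply. Once these points are settled, the Seifert matrix takes the announced form $\bigl(\begin{smallmatrix}E&0\\I&F\end{smallmatrix}\bigr)$.
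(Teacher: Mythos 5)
Your construction of the surface is where the argument breaks down, and it is precisely the point the paper is engineered around. Attaching to the disk $D$ one fully twisted band per chord does not produce an orientable surface with boundary $K$: already for a diagram with a single chord, a disk with one band attached along its boundary circle has either two boundary components (if the number of half-twists is even --- this is a Hopf band) or is non-orientable (if it is odd), whereas the corresponding knot is a single circle. The paper therefore treats the two chord families asymmetrically: for an \emph{inner} chord it \emph{cuts} the disk along the chord and reglues a doubly twisted band, which leaves the surface a topological disk and contributes \emph{nothing} to $H_1$; for an \emph{outer} chord it explicitly notes that ``simply connecting the ends of the two half-chords by two half-twisted bands results in an unorientable surface'' and instead attaches \emph{two} bands (a long one plus a small doubly twisted perpendicular one), contributing two generators $e_k,f_k$ to $H_1(S)$. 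So the rank of $H_1(S)$ is $2n$ with $n$ the number of outer chords only, not $m+n$ with one generator per chord.

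This sinks the subsequent linear algebra. Your deduction that $m=n$ is false as a statement about bipartite chord diagrams: the paper explicitly discusses what to do ``if the number of outer chords is bigger than the number of inner chords,'' and, for instance, the matched diagram of a rational knot built from the even continued fraction $[4,2]$ has two chords in one family and one in the other, so $m+n=3$ is odd and cannot be the rank of $H_1$ of any orientable surface with connected boundary. Your Lagrangian argument does not prove $m=n$; it proves that the surface you posited cannot exist. The remaining steps --- the half-space argument for $\operatorname{lk}(\alpha_i^+,\beta_j)=0$ and the unimodular normalization of the off-diagonal block to $I$ --- are in the right spirit and echo how the paper obtains the blocks $0$ and $I$ (and the symmetry of $F$ from disjointness of the $f_i$), but they are computed in a basis of the wrong module. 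To repair the proof you must adopt the paper's asymmetric construction (or an equivalent orientable one); the block $E$ then comes out diagonal with entries $\pm 1$ determined by the signs of the outer chords, and the announced form of the Seifert matrix follows.
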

\begin{proof}
Consider a bipartite knot $K$ and its plane diagram,
construct the chord diagram as indicated above. Start constructing the
Seifert surface from the inner circle of the chord diagram, out of which we
cut out every chord together with a small open neighborhood 
and glue instead a double twisted band,
so that the direction of the twists corresponds to the sign of the chord 
(see Fig.~\ref{fig:p3}). 
\begin{figure}[htbp]
	\centering
	\includegraphics[width=0.5\textwidth]{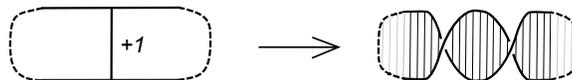}
	\caption{Construction of Seifert surface: inner chords}
	\label{fig:p3}
\end{figure}

So far the surface remains orientable, and its boundary follows the knot as
much as it can. Now we must add the bands along the outer chords. Here one
must be cautious, because simply connecting the ends of the two half-chords
by two half-twisted bands results in an unorientable surface. We will do as 
follows: first attach
a band along each outer chord, then, around the middle of that band, we
attach a perpendicular small band, which is twice twisted opposite to
the sign of the chord.  
\begin{figure}[htbp]
	\centering
	\includegraphics[width=0.7\textwidth]{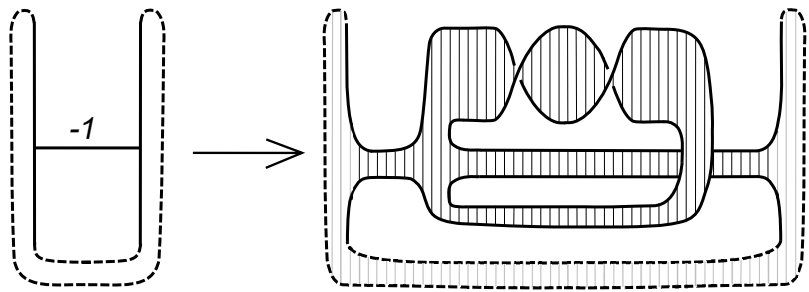}
	\caption{Construction of Seifert surface: outer chords}
	\label{fig:p4}
\end{figure}

We show this procedure on Fig.~\ref{fig:p4} for one chord on big scale and
on Fig.~\ref{seifert} for a whole Seifert surface of a certain knot.
In the latter picture, the narrow twice twisted bands on the left and on the
right should be though of as lying above the surface of the corresponding
perpendicular wide bands. The thick solid lines indicate the boundary of the
Seifert surface; the four small dashed segments show parts of the visible
contour of the surface which do not belong to its boundary. The two sides of
the Seifert surface (which is two-sided by definition) are indicated by
different shades of gray.
\begin{figure}[htbp]
	\centering
	\includegraphics[width=6cm]{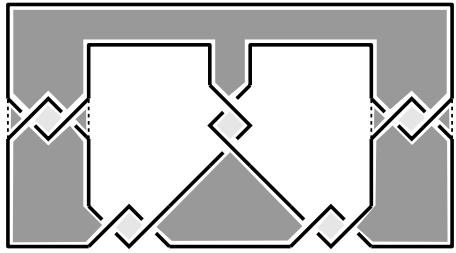}
	\caption{Seifert surface for a matched diagram of knot $8_{15}$}
	\label{seifert}
\end{figure}

Let $n$ be the number of outer chords (if this number is greater than the
number of inner chords, we can turn the chord diagram inside outside to
simplify computations). Then, as a basis of $H_1(S)$, one can take the set
$e_1,\dots,e_n,f_1,\dots,f_n$ corresponding to the outer chords and shown in
Fig.~\ref{fig:p5}.

\begin{figure}[ht]
	\centering
	\includegraphics[width=0.7\textwidth]{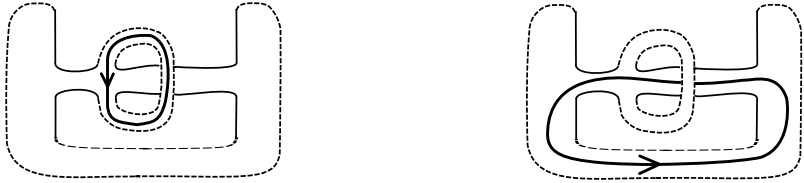}
	\caption{Cycles $e_k$ (left) and $f_k$ (right)}
	\label{fig:p5}
\end{figure}

It follows that the Seifert matrix for this Seifert surface (see \cite{Lik})
has the form
$$
  M=\begin{pmatrix} E & 0 \\ I & F \end{pmatrix},
$$
where $0$, $I$, $E$, $F$ are matrices of size $n \times n$, $I$ is the unit
matrix, $0$ is the zero matrix, and
$F_{i,j}=lk(f_i,f_j^+)$, $E_{i,j}=lk(e_i,e_j^+)$. 
It is clear that $E$ is a diagonal matrix with numbers $\pm 1$ on the
diagonal (the sign is inverse to the sign of the outer chord number $k$). 
The cycles $f_i$ and $f_j$ can be chosen so that not to have common points,
if $i\not=j$, therefore, 
$lk(f_i,f_j^+)=lk(f_i,f_j)=lk(f_j,f_i)=lk(f_j,f_i^+)$ and thus the matrix
$F$ is symmetric.

This construction shows that, on a practical side, it is advisable to turn
the diagram inside outside, if the number of outer chords is bigger than
that of inner chords --- as we did before for the example knot $8_{15}$. 
\end{proof}

\section{Alexander matrices}

The determinant of the Alexander matrix $A=tM-M^\top$ is equal to the
Alexander polynomial of the knot $K$; it is an element of the ring of
Laurent polynomials $\Z[t,t^{-1}]$, determined up to multiplication
by invertible elements of the ring, that is, monomials $\pm t^m$.
The Alexander matrix is not determined by the knot uniquely; in fact, 
to any knot there corresponds a big family of Alexander matrices
related between themselves by a set of equivalence transformations 
which is well known (see \cite{Lik}. In particular, even the size of the
matrix $A$ is not invariant. What is invariant, however, is the
sequence of Alexander ideals, the $m$-th ideal being defined as the ideal in
$\Z[t,t^{-1}]$ generated by all minors of an arbitrary Alexander matrix
of size $n-m+1$, where $n$ is the smallest among the number of columns
and rows in $A$ (see \cite{Lik}).

It is well known that the Alexander polynomial can be rewritten in terms of
the Conway variable $z^2=t+t^{-1}-2$; in general, this is no longer true about
the generators of all Alexander ideals.
In the case of bipartite knots, we can prove a stronger assertion.

\begin{lemma}\label{lemma2}
If the knot $K$ is bipartite, then there exists a square 
integer matrix $B$ such that the matrix $I+z^2B$ is an Alexander matrix for
$K$ (here $I$ is the unit matrix).
\end{lemma}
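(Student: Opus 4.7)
The plan is to apply Lemma~\ref{lemma1} to get a Seifert matrix $M=\begin{pmatrix}E & 0\\ I & F\end{pmatrix}$ with $E,F$ symmetric, form the $2n\times 2n$ Alexander matrix
$$A=tM-M^\top=\begin{pmatrix}(t-1)E & -I\\ tI & (t-1)F\end{pmatrix},$$
and reduce it, via $\Z[t,t^{-1}]$-equivalence of presentation matrices (elementary row/column operations, multiplication of rows or columns by units $\pm t^k$, and (de)stabilization by identity blocks), to an $n\times n$ matrix of the advertised form $I+z^2B$.

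The key observation is that both off-diagonal blocks of $A$ are, up to a unit and a sign, the $n\times n$ identity, so they can serve as pivots to kill the two diagonal blocks. First I would perform column operations, adding $(t-1)E_{ji}$ times the $(n+j)$-th column to the $i$-th one. The symmetry $E=E^\top$ is essential here: it ensures that $\sum_j E_{ji}e_j=Ee_i$, so the $(t-1)E$ block becomes $0$ while the lower-left block transforms cleanly into $tI+(t-1)^2FE$. Symmetrically, I would then do row operations --- adding $(t-1)F_{ji}$ times the $i$-th row to the $(n+j)$-th --- which, using $F=F^\top$, annihilate the $(t-1)F$ block without disturbing the freshly produced lower-left block.

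At this point the matrix reads $\begin{pmatrix}0 & -I\\ tI+(t-1)^2FE & 0\end{pmatrix}$. A permutation of block columns, followed by scaling one block of rows by the unit $-1$, brings it into block-diagonal form with blocks $I$ and $tI+(t-1)^2FE$; de-stabilizing $n$ times removes the $I$ block, leaving $tI+(t-1)^2FE$. Since $(t-1)^2=tz^2$, this equals $t(I+z^2FE)$; scaling every row by the unit $t^{-1}$ yields $I+z^2B$ with $B=FE$, a square integer matrix as required.

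The main difficulty is really just the block-matrix bookkeeping and invoking the symmetry of $E$ and $F$ at precisely the right moments --- without them the residue of the elimination would contain $E^\top$ or $F^\top$ in positions that do not recombine into a single integer matrix. Otherwise each move is a standard equivalence of presentation matrices, preserving all Alexander ideals of $K$, so the resulting matrix is a genuine Alexander matrix for $K$.
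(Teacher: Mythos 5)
Your proof is correct and follows essentially the same route as the paper: starting from the Seifert matrix of Lemma~\ref{lemma1}, both arguments use the unit off-diagonal blocks of $tM-M^\top$ as pivots to eliminate the diagonal blocks, with the symmetry of $E$ and $F$ guaranteeing that the residue is $(t-1)^2FE=tz^2FE$, and then rescale and destabilize. The only differences are cosmetic (you eliminate first and rescale afterwards, obtaining $B=FE$ where the paper's order gives $B=EF$).
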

\begin{proof}
Consider the Seifert matrix $M$ from Lemma~\ref{lemma1}.
Put $A=tM-M^\top$, multiply the left block column by $-1$, the second
by $t^{-1}$, then interchange both columns. Using the symmetry of $E$ and
$F$, we get:
$$
tA-A^\top\leftrightarrow 
\begin{pmatrix} (t-1)E & -I \\ tI & (t-1)F \end{pmatrix}
\leftrightarrow \begin{pmatrix} I & (1-t^{-1})E  \\  
(1-t)F & I \end{pmatrix} .
$$ 
By a sequence of elementary transformations, we can make zero the upper
right block of this matrix, using its right lower block:
In doing so, we will be always adding polynomials 
$(1-t)a(1-t^{-1})b=-z^2ab$ to the elements of left upper block. 
In the end, the matrix will become
$$\begin{pmatrix} I+z^2B & 0  \\  (1-t)F & I \end{pmatrix} \leftrightarrow 
\begin{pmatrix} I+z^2B &0  \\  0 & I \end{pmatrix} \leftrightarrow I+z^2B .$$ 
\end{proof} 

To achieve our goal, it only suffices to prove one technical proposition.

\begin{lemma}\label{lemma3}
Let $p_1(x),\dots,p_n(x)\in\Z[x]$ be a set of ordinary polynomials
and $I=\langle p_1(z^2),\dots,p_n(z^2)\rangle$ be the corresponding 
ideal in $\Z[t,t^{-1}]$. Suppose that $I$ contains the binomial $(1+t)$. 
The the ideal $I$ is trivial: $I=\Z[t,t^{-1}]$.
\end{lemma}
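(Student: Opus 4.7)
The plan is to specialize to $t=-1$, where $z^2\mapsto -4$ and the hypothesis $(1+t)\in I$ becomes effective. Let $\phi\colon\Z[t,t^{-1}]\to\Z$ be the evaluation $t\mapsto -1$. Then $\phi(I)=d\Z$ with $d=\gcd_i p_i(-4)$, so $I=\Z[t,t^{-1}]$ forces $d=1$. Conversely, once $d=1$ is known, choose integers $a_i$ with $\sum_i a_i\,p_i(-4)=1$; the Laurent polynomial $h(t):=1-\sum_i a_i\,p_i(z^2)$ vanishes at $t=-1$ and so, after multiplying by a suitable $t^N$, is divisible by $(1+t)$ in $\Z[t,t^{-1}]$. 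Combined with the hypothesis $(1+t)\in I$, this yields $1\in I$. Thus the entire lemma reduces to proving $\gcd_i p_i(-4)=1$.

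To establish this, I would use $(1+t)\in I$ at second order. Writing $1+t=\sum_i q_i(t)\,p_i(z^2)$ and using the identity $z^2+4=(1+t)^2/t$, one sees $p_i(z^2)\equiv p_i(-4)\pmod{(1+t)^2}$. Working in the local quotient $\Z[t,t^{-1}]/(1+t)^2\cong\Z[u]/(u^2)$ with $u=1+t$, expanding $q_i(t)\equiv q_i(-1)+q_i'(-1)\,u\pmod{u^2}$ and equating coefficients of $u$ in
$$u=\sum_i\bigl(q_i(-1)+q_i'(-1)u\bigr)\,p_i(-4)$$
produces the crucial identity $1=\sum_i q_i'(-1)\,p_i(-4)$, which is exactly the statement $\gcd_i p_i(-4)=1$. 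Equivalently, one can formally differentiate $1+t=\sum_i q_i(t)\,p_i(z^2)$ in $t$ and evaluate at $t=-1$, noting that $\tfrac{d}{dt}(z^2)=1-t^{-2}$ vanishes there so all $p_i'(z^2)$-contributions drop out.

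The whole argument rests on the double-zero observation: $z^2+4$ has a \emph{double} root at $t=-1$, while $1+t$ has only a simple one, so any representation of $1+t$ as an $\Z[t,t^{-1}]$-combination of the $p_i(z^2)$ must carry nontrivial first-order information at $t=-1$, and that information is precisely what pins $\gcd_i p_i(-4)$ down to $1$. Mere evaluation at $t=-1$ only yields the trivial relation $\sum_i q_i(-1)p_i(-4)=0$, which is why one has to pass to the second-order quotient (or, equivalently, differentiate). Beyond isolating this point, the remaining steps are routine divisibility in $\Z[t,t^{-1}]$, so I do not expect further obstacles.
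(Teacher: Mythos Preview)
Your argument is correct and hinges on the same key fact as the paper's proof: $z^2+4=(1+t)^2t^{-1}$ has a double zero at $t=-1$ while $1+t$ has only a simple one. The organisation differs, however. The paper first observes $(1+t)(1+t^{-1})=z^2+4\in I$, reduces each $p_k(z^2)$ modulo $z^2+4$ to a constant $a_k=p_k(-4)$, so that $I=\langle z^2+4,\,a\rangle$ with $a=\gcd_k a_k$, and then writes $1+t=t^{-1}(1+t)^2q_1(t)+a\,q_2(t)$ and divides through by $1+t$ (using that $1+t$ is prime in $\Z[t,t^{-1}]$ and cannot divide the nonzero integer $a$) to get $1\in I$ directly---never isolating the statement $a=\pm1$. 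Your route instead makes the arithmetic content explicit by reading off $\gcd_i p_i(-4)=1$ from the coefficient of $u$ in $\Z[t,t^{-1}]/(1+t)^2\cong\Z[u]/(u^2)$, and only then reconstructs $1\in I$. What you gain is a transparent explanation of \emph{why} the ideal is all of $\Z[t,t^{-1}]$ (the gcd is forced to be $1$); what the paper gains is brevity, since it bypasses the separate gcd computation. The two are equivalent unpackings of the same double-root phenomenon.
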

\begin{proof}
It is clear that $(t+1)(1+t^{-1})=z^2+4\in I$. 
Then division gives $p_k(z^2)=p_k^0(z^2)(z^2+4)+a_k$, where $a_k\in\Z$ 
are some integers. Then our ideal coincides with $I=\langle z^2+4, a\rangle$, 
where $a=(a_1, \dots, a_n)$ is the greatest common divisor of all $a_i$'s. 
Expand the element $1+t$ in the new generators: 
$1+t=t^{-1}(t+1)^2q_1(t)+aq_2(t)$. Then $aq_2(t)$ is divisible by $1+t$. 
Therefore, 
$$
  1=(t+1)t^{-1}q_1(t)+a\frac{q_2(t)}{t+1} \in I,
$$
and the ideal is trivial.
\end{proof}

\section{Main result}

The last lemmas show that the Alexander ideals of bipartite knots cannot be
arbitrary. In particular, they are always generated by polynomials in $z^2$.

\begin{theorem}Let $K$ by a bipartite knot. 
If the Alexander ideal $I_m(K)$ is non-trivial, 
then it cannot contain the polynomial $1+t$.
\end{theorem}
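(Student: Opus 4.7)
The plan is to combine the two lemmas immediately preceding the theorem, the only subtle point being to verify that the hypothesis of Lemma~\ref{lemma3} applies to every Alexander ideal.

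First, I would invoke Lemma~\ref{lemma2} to replace the Alexander matrix of $K$ by the representative $A = I + z^2 B$, where $B$ is an $n \times n$ integer matrix. Since the Alexander ideals are invariants of $K$ and not of a particular matrix presentation, $I_m(K)$ equals the ideal in $\Z[t,t^{-1}]$ generated by the $(n-m+1) \times (n-m+1)$ minors of this specific $A$.

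Next comes the key observation: every entry of $I + z^2 B$ lies in the subring $\Z[z^2] \subset \Z[t,t^{-1}]$, hence every minor, being a polynomial expression in the entries, is itself an element of $\Z[z^2]$. Thus there exist ordinary polynomials $p_1(x),\dots,p_N(x) \in \Z[x]$ (the minors viewed as polynomials in the single variable $x = z^2$) such that
$$
I_m(K) = \langle p_1(z^2),\dots,p_N(z^2)\rangle.
$$
This is precisely the setup of Lemma~\ref{lemma3}.

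Finally, I argue by contrapositive: suppose $I_m(K)$ contains $1+t$. Then the hypotheses of Lemma~\ref{lemma3} are satisfied, and the conclusion gives $I_m(K) = \Z[t,t^{-1}]$, i.e.\ the ideal is trivial. Equivalently, if $I_m(K)$ is non-trivial then $1+t \notin I_m(K)$, which is the assertion of the theorem. I do not expect any genuine obstacle here; the only thing worth double-checking is the standard fact that the Alexander ideals can indeed be computed from any Alexander matrix (so that we are free to use the special representative furnished by Lemma~\ref{lemma2}), together with the elementary but crucial remark that $\Z[z^2]$ is closed under the determinantal operations producing the minors.
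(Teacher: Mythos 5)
Your proposal is correct and follows exactly the route the paper intends: the paper's proof is the single sentence ``direct consequence of Lemmas~\ref{lemma2} and \ref{lemma3}'', and your elaboration --- that the minors of $I+z^2B$ all lie in $\Z[z^2]$, so the Alexander ideal is generated by polynomials in $z^2$ and Lemma~\ref{lemma3} applies by contraposition --- is precisely the intended filling-in of that sentence. No discrepancy to report.
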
 
\begin{proof} 
This is a direct consequence of Lemmas \ref{lemma2} and \ref{lemma3}.
\end{proof}

This condition immediately give a series of knots which are not bipartite.

\begin{coroll}
Rolfsen table knots $9_{35}, 9_{37}, 9_{41}, 9_{46},9_{47},9_{48},9_{49}, 
10_{74}, 10_{75}, 10_{103}, 10_{155}, 10_{157}$ are not bipartite.
\end{coroll}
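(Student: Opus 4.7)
My plan is to apply the Theorem separately to each of the twelve knots. For every knot $K$ in the list I must exhibit an index $m$ such that the elementary ideal $I_m(K) \subset \Z[t,t^{-1}]$ is proper but contains the polynomial $1+t$; the Theorem then forbids $K$ from being bipartite.

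For each knot I would take a Seifert matrix $M$ from a standard source (Rolfsen's table, Kawauchi's appendix, or KnotInfo), form the Alexander matrix $A = tM - M^\top$, and compute the successive elementary ideals $I_1 \subset I_2 \subset \cdots$, most efficiently by reducing the Alexander module to a direct sum of cyclic modules $\bigoplus_i \Z[t,t^{-1}]/(d_i(t))$ with $d_1 \mid d_2 \mid \cdots \mid d_r$; then $I_m = (d_m d_{m+1} \cdots d_r)$. I cannot use $m=1$: since $I_1(K) = (\Delta_K(t))$ and the knot determinant $|\Delta_K(-1)|$ is always odd, $1+t \notin I_1(K)$. So the witness must lie in $I_2$ (or possibly $I_3$). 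Membership of $1+t$ is then checked by exhibiting an explicit $\Z[t,t^{-1}]$-linear combination of generators, and properness is verified by evaluating at $t=-1$ and observing that the image of $I_m(K)$ in $\Z$ is a proper subgroup.

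As a representative calculation, take $9_{46}$: its Alexander polynomial $(2t-1)(t-2)$ together with the classical fact that its Alexander module splits as $\Z[t,t^{-1}]/(2t-1) \oplus \Z[t,t^{-1}]/(t-2)$ gives $I_2(9_{46}) = (2t-1,\, t-2)$. The combination $1 \cdot (2t-1) + (-1)\cdot(t-2) = 1+t$ shows that $1+t \in I_2(9_{46})$, and the evaluation $t \mapsto -1$ sends both generators to $-3$, so the image in $\Z$ is $3\Z$, whence $I_2(9_{46})$ is proper. The Theorem therefore prohibits $9_{46}$ from being bipartite. The remaining eleven knots are handled by the same recipe: identify the (typically reducible) Alexander module, read off $I_2(K)$, and verify the two properties directly.

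The main obstacle is purely computational: twelve separate elementary-ideal calculations must be performed or cited. For the seven 9-crossing knots the answer is immediate from the invariant-factor decomposition of the Alexander module, which is classical and tabulated. For the five 10-crossing knots a Smith-normal-form reduction over $\Z[t,t^{-1}]$ may be required, but this is routine once the Seifert matrix is in hand and can be done by hand after factoring $\Delta_K$.
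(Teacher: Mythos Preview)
Your approach is correct and is exactly the paper's: apply the Theorem by checking, for each listed knot, that its second Alexander ideal is nontrivial yet contains $1+t$; the paper's own argument is in fact terser than yours, merely citing Lickorish for the $9_{46}$ computation and the Knot Atlas tables for the remaining eleven knots, so your worked example and Seifert-matrix recipe already give more detail than the original. One small caveat: $\Z[t,t^{-1}]$ is not a PID, so the invariant-factor decomposition $\bigoplus_i \Z[t,t^{-1}]/(d_i)$ with $d_1\mid d_2\mid\cdots$ and the formula $I_m=(d_m\cdots d_r)$ need not be available in general---when it fails, just compute $I_2$ directly as the ideal generated by the $(n{-}1)\times(n{-}1)$ minors of the presentation matrix, as you implicitly do anyway.
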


\begin{proof}
For the knot $9_{46}$, also known as the pretzel knot with parameters
$(3,3,-3)$, a detailed calculation of the second Alexander ideal is
available from \cite{Lik}. For other knots from the given list, we borrowed
the result from computer generated tables of Knot Atlas \cite{KA}.
\end{proof}

\section{From under the carpet}

Contrary to the universal tradition, we allow ourselves to raise the carpet
and explain how we actually arrived at this solution.

It was clear to us from the beginning that the rational knots are all
bipartite. Then we designed a procedure to very quickly compute the Conway
polynomial of a bipartite graph, starting from the corresponding signed
intersection graph. Looking through the table of all knots through 8
crossings, we managed to find the bipartite graphs that would give the same
Conway polynomials, and after one or two tries, using Knotscape \cite{Ksc}
and Knotinfo \cite{Kno}, we obtained a bipartite
representation for the corresponding knots. This worked for all knots, save
for $8_{18}$. Now, this is the only knot until 8 crossings with nontrivial
second Alexander ideal. We looked at other knots with nontrivial second
Alexander ideal and found some that cannot be expressed through the Conway 
variable $z^2$. 
On the other hand, we devised a procedure to represent the Alexander
matrix of a bipartite graph in terms of $z^2$.

After this work was finished, the second author (M.~Sh.) invented another
argument showing that a knot with second Alexander ideal 
$\langle 3,t^2+1 \rangle$,
e.g. the table knot $10_{122}$, cannot be bipartite.
A separate publication is being prepared to this end.

To summarize, we have presented a sufficient condition for a knot not
to have any matched diagram. We do not know, however, of any reasonable
necessary condition in terms of Alexander ideals.
The simplest knot which still stands our efforts, is $8_{18}$.

\end{document}